\def\qed{\hfill $\Box$}
\theoremstyle{plain} 
\newtheorem{theorem}{\noindent\bf Theorem}[section] 
\newtheorem{lemma}[theorem]{\noindent\bf Lemma}
\newtheorem{corollary}[theorem]{\noindent\bf Corollary}
\theoremstyle{definition} 
\newtheorem{remark}[theorem]{\indent\bf Remark}
\newcommand{\End}[0]{\operatorname{End}}
\newcommand{\Ker}[0]{\operatorname{Ker}}
\newcommand{\deldel}{\sqrt{-1}\partial \overline{\partial}}
\newcommand{\dbar}{\overline{\partial}}
\newcommand{\e}{\varepsilon}
\newcommand{\lla}[0]{{\langle\!\hspace{0.02cm} \!\langle}}
\newcommand{\rra}[0]{{\rangle\!\hspace{0.02cm}\!\rangle}}
\def\address#1#2{\begingroup
\noindent\parbox[t]{7.8cm}{
\small{\scshape\ignorespaces#1}\par\vskip1ex
\noindent\small{\itshape E-mail}
\/: #2\par\vskip4ex}\hfill
\endgroup}
\title{Hessian of the Ricci Calabi functional}
\author{
\textsc{Satoshi Nakamura$^{*}$}
} 
\date{}
\begin{document}

\maketitle


\footnote{ 
2010 \textit{Mathematics Subject Classification}.
Primary 53C25; Secondary 53C55, 58E11.
}
\footnote{ 
\textit{Key words and phrases}.
Ricci Calabi functional, Hessian, Generalized K\"ahler Einstein metrics, Matshushima's type decomposition theorem}
\footnote{ 
$^{*}$Partly supported by Grant-in-Aid for JSPS Fellowships for Young Scientists, Number 17J02783.
}

\begin{abstract}
The Ricci Calabi functional is a functional on the space of K\"ahler metrics of Fano manifolds. Its critical points are called generalized K\"ahler Einstein metrics. In this article, we show that the Hessian of the Ricci Calabi functional is non-negative at  generalized K\"ahler Einstein metrics.
As its application, we give another proof of a Matsushima's type decomposition theorem for holomorphic vector fields, which was originally proved by Mabuchi.
We also discuss a relation to the inverse Monge-Amp\`ere flow developed recently by Collins-Hisamoto-Takahashi.
\end{abstract}


\section{Introduction}
In his paper \cite{Ma01}, Mabuchi extended the notion of K\"{a}hler Einstein metrics for Fano manifolds with non-vanishing Futaki invariant.
In this paper, we call them generalized K\"{a}hler Einstein metrics.
Let $X$ be an $n$-dimensional Fano manifold and $\omega \in 2\pi c_1(X)$ be a reference
K\"{a}hler metric.
We denote its volume $\int_X\omega^n$  by $V$.
Let $$\mathcal{M}(\omega) :=\Set{\phi\in C_{\mathbb{R}}^{\infty}(X) | \omega_{\phi}:= \omega +\deldel\phi >0}$$
be the space of K\"ahler metrics in $[\omega]=2\pi c_1(X).$
We usually identify the K\"ahler metric $\omega_{\phi}$ with its potential $\phi\in\mathcal{M}(\omega)$.
Note that the tangent space $T_{\phi}\mathcal{M}(\omega)$ is nothing but $C^{\infty}_{\mathbb{R}}(X).$
We denote the Ricci form for $\phi$ by $\mathrm{Ric}(\omega_{\phi})= -\deldel\log\det\omega_{\phi}^n$.
The Ricci potential $f_{\phi}$ for $\phi$ is a function satisfying
\begin{equation}\label{Riccipotential}
\mathrm{Ric}(\omega_{\phi})-\omega_{\phi}=\deldel f_{\phi} \quad\text{and}\quad \int_X e^{f_{\phi}}\omega_{\phi}^n =V.
\end{equation}
Then $\omega_{\phi} = \sqrt{-1}g_{i \bar{j}}dz^i\wedge d\bar{z}^j$ is called {\it generalized K\"{a}hler Einstein} 
if the complex gradient vector field of $1-e^{f_{\phi}}$ is holomorphic, that is,
$$\dbar \Bigl(g^{i\bar{j}} \frac{\partial (1-e^{f_{\phi}})}{\partial\bar{z}^j} \frac{\partial}{\partial z^i} \Bigr)=0.$$
If $X$ has no nontrivial holomorphic vector field, generalized K\"ahler Einstein metrics are nothing but K\"ahler Einstein metrics.
In general, if the Futaki invariant of $X$ vanishes, these are K\"ahler Einstein metrics.

Yao \cite{Ya} gave a characterization of generalized K\"ahler Einstein metrics in terms of the {\it Ricci Calabi functional} (Yao calls it the Ding energy).
The Ricci Calabi functional $\mathcal{E}_{\mathrm{RC}}$ is a functional on $\mathcal{M}(\omega)$ defined by
$$\mathcal{E}_{\mathrm{RC}}(\phi) = \int_X (1-e^{f_{\phi}})^2 \omega_{\phi}^n.$$
Yao observed that $\phi\in\mathcal{M}(\omega)$ is a critical point of $\mathcal{E}_{\mathrm{RC}}$ 
if and only if it is a generalized K\"ahler Einstein metric (Yao calls it the Mabuchi metric). See section \ref{variation}.
Following Donaldson's new GIT (Geometric Invariant Theory) picture \cite{Do17} for Fano manifolds, 
this functional $\mathcal{E}_{\mathrm{RC}}$ can be seen as the norm squared of a moment map on the corresponding moduli space.
This shows that generalized K\"ahler Einstein metrics can be seen as one of the canonical K\"ahler metrics for Fano manifolds.
Note that they are not in general neither extremal K\"ahler metrics nor K\"ahler Ricci solitons.

The main result of this paper is a development of the observation by Yao \cite{Ya}.
In fact, we show that generalized K\"ahler Einstein metrics are local minimums of the Ricci Calabi functional. 
To state results more precisely, we fix more notations.
We denote the (negative) Laplacian of $\omega_{\phi}\in\mathcal{M}(\omega)$ by $\Delta_{\phi}$.
Let us define $$Lu=\Bigl( -\Delta_{\phi}u -\langle \dbar u, \dbar f_{\phi}\rangle -u + \frac{1}{V}\int_X u e^{f_{\phi}}\omega_{\phi}^n \Bigr)e^{f_{\phi}}$$
as an operator on $C_{\mathbb{C}}^{\infty}(X)$. 
We define its complex conjugate operator by $\overline{L}u:=\overline{L\overline{u}}.$
More precisely we have
$$\overline{L}u=\Bigl( -\Delta_{\phi}u -\langle \partial u, \partial f_{\phi} \rangle -u + \frac{1}{V}\int_X u e^{f_{\phi}}\omega_{\phi}^n \Bigr)e^{f_{\phi}}.$$
We define a natural inner product on $C_{\mathbb{C}}^{\infty}(X)$ by $$\lla u,v \rra = \int_X u\overline{v} \omega_{\phi}^n.$$
Note that operators $L$ and $\overline{L}$ and the inner product $\lla \cdot, \cdot \rra$ depend on a K\"ahler metric $\phi\in\mathcal{M}(\omega)$.

The followings are main results of this paper.
\begin{theorem}\label{Hessian1}
The Hessian of the Ricci  Calabi functional $\mathcal{E}_{\mathrm{RC}}$ at every generalized K\"ahler Einstein metric $\phi$ along directions $\delta\phi_1, \delta\phi_2\in T_{\phi}\mathcal{M}(\omega)$ is given by
$$\mathrm{Hess}(\mathcal{E}_{\mathrm{RC}})(\delta\phi_1, \delta\phi_2)
=2\lla L\overline{L}\delta\phi_1,\delta\phi_2\rra=2\lla \overline{L}L\delta\phi_1,\delta\phi_2\rra.$$
\end{theorem}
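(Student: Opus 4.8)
The plan is to compute the second variation of $\mathcal{E}_{\mathrm{RC}}(\phi) = \int_X (1-e^{f_\phi})^2\omega_\phi^n$ directly, starting from the first variation already recorded in section \ref{variation}, and then to recognize the resulting bilinear form as $2\lla L\overline L \delta\phi_1,\delta\phi_2\rra$. The essential input is the linearization of the Ricci potential: if $\phi_t$ is a path with $\dot\phi = \delta\phi$, then differentiating \eqref{Riccipotential} (together with the normalization $\int_X e^{f_\phi}\omega_\phi^n = V$) shows that $\dot f_\phi$ satisfies $\deldel\dot f_\phi = \deldel(-\Delta_\phi\delta\phi - \delta\phi)$ up to the constraint, so that
$$\dot f_\phi = -\Delta_\phi \delta\phi - \langle\dbar\delta\phi,\dbar f_\phi\rangle - \delta\phi + \frac{1}{V}\int_X \delta\phi\, e^{f_\phi}\omega_\phi^n;$$
the middle term $-\langle\dbar\delta\phi,\dbar f_\phi\rangle$ appears because $\Delta_\phi$ is the Laplacian of the \emph{moving} metric and one must differentiate the metric as well, and the constant is fixed by differentiating the normalization. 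Note that this is exactly the quantity for which $e^{f_\phi}\dot f_\phi = L(\delta\phi)$ when $\delta\phi$ is real. I would state and prove this linearization formula as the first step, as a standalone lemma if it is not already available from the first-variation discussion.

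**Differentiating the first variation.**

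From section \ref{variation} the first variation has the form $\frac{d}{dt}\mathcal{E}_{\mathrm{RC}}(\phi_t) = \int_X (\text{something in } f_\phi, \delta\phi_1)\,\omega_\phi^n$; writing it as $\int_X (1-e^{f})(-2 e^{f}\dot f) \omega^n + \int_X (1-e^f)^2 \Delta_\phi(\delta\phi_1)\,\omega^n$ and then simplifying using the critical point / generalized K\"ahler Einstein condition is not yet needed — we differentiate once more along a second path with $\dot\phi = \delta\phi_2$ and only afterwards impose that $\phi$ is generalized K\"ahler Einstein. The second step, then, is to apply $\frac{d}{dt}$ to the first-variation integrand: this produces (i) terms where the derivative hits $e^{f_\phi}$ or $(1-e^{f_\phi})$, contributing factors of $\dot f_{\phi,2} = e^{-f}L(\delta\phi_2)$ after rescaling; (ii) terms where the derivative hits the volume form $\omega_\phi^n$, contributing a factor $\Delta_\phi(\delta\phi_2)$; and (iii) a term where the derivative hits the first variation $\dot f_{\phi,1}$ itself, i.e. the \emph{second} derivative $\ddot f_\phi$, which is where the operators $\Delta_\phi$, $\langle\dbar\,\cdot\,,\dbar f_\phi\rangle$ get differentiated in turn. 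The key simplification is that at a generalized K\"ahler Einstein metric many of these terms either vanish or reorganize: one uses the defining PDE to replace occurrences of $1-e^{f_\phi}$ and exploits that $1-e^{f_\phi}$ is, up to the holomorphy condition, in the kernel-type subspace picked out by $L$. After integrating by parts to move all derivatives onto one slot, the $\ddot f_\phi$ contribution should drop out because it is paired against $1-e^{f_\phi}$, which the critical-point equation controls.

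**Identifying the operator $L\overline L$ and the two orderings.**

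Once the dust settles, the bilinear form should read $2\int_X e^{-f_\phi} L(\delta\phi_1)\overline{L(\delta\phi_2)}\,\omega_\phi^n$ or a near-equivalent, which by the definition $\lla u,v\rra = \int_X u\bar v\,\omega_\phi^n$ is $2\lla L\overline L\delta\phi_1,\delta\phi_2\rra$ — here one uses that for real $\delta\phi_2$ one has $\overline{L(\delta\phi_2)} = \overline L(\delta\phi_2)$, and a further integration by parts swaps $L$ and $\overline L$ to the other argument to get $\lla \delta\phi_1, L\overline L\delta\phi_2\rra$; combined with the symmetry of the Hessian this also yields $\lla\overline L L\delta\phi_1,\delta\phi_2\rra$. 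The equality of the two orderings $L\overline L = \overline L L$ on the relevant space, or at least the equality of the two inner products against $\delta\phi_2$, will use that the zeroth-order and gradient-type terms in $L$ and $\overline L$ are formal adjoints of each other with respect to $\lla\cdot,\cdot\rra$, together with the generalized K\"ahler Einstein equation to kill the commutator; this is essentially a Bochner/Weitzenb\"ock-type identity on Fano manifolds specialized to the twisted operator $L$.

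**Expected main obstacle.**

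The hard part is the bookkeeping in the second step: there are many terms of similar shape (products of $\Delta_\phi\delta\phi_i$, $\langle\dbar\delta\phi_i,\dbar f_\phi\rangle$, $\delta\phi_i$, their integrals against $e^{f_\phi}\omega_\phi^n$, and the second-order term $\ddot f_\phi$), and it is easy to mismatch a sign or to drop a contribution from differentiating the Laplacian of the moving metric. The crucial structural fact that makes everything collapse is the generalized K\"ahler Einstein condition, which says precisely that $\dbar$ of the complex gradient of $1-e^{f_\phi}$ vanishes; concretely this means $\Delta_\phi(1-e^{f_\phi}) + \langle\dbar(1-e^{f_\phi}),\dbar f_\phi\rangle + (1-e^{f_\phi}) = $ (a constant), i.e. $1-e^{f_\phi}$ lies in the kernel of a shifted operator closely related to $L$. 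Keeping this identity in hand and using it at every opportunity to eliminate the $\ddot f_\phi$-term and to symmetrize is the key to a clean proof; without imposing it the Hessian would not factor through $L\overline L$ at all.
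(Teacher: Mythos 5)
Your outline contains a concrete error at its foundation. The linearization of the Ricci potential is
$$\delta f_{\phi} = -\Delta_{\phi}\delta\phi-\delta\phi + \frac{1}{V}\int_X \delta\phi\, e^{f_{\phi}}\omega_{\phi}^n,$$
with \emph{no} term $-\langle \dbar\delta\phi,\dbar f_{\phi}\rangle$ (this is the paper's Lemma 2.1): the variation of $\log\det\omega_{\phi}^n$ along $\delta\phi$ is exactly $\Delta_{\phi}\delta\phi$ evaluated at the current metric, and "differentiating the moving metric inside the Laplacian" produces nothing further at first order. Consequently your claimed identity $e^{f_{\phi}}\dot f_{\phi}=L(\delta\phi)$ is false; what is true is $L(\delta\phi)=e^{f_{\phi}}\bigl(\delta f_{\phi}-\langle\dbar\delta\phi,\dbar f_{\phi}\rangle\bigr)$. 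The gradient term in $L$ (and the term $-\langle\partial\delta\phi,\partial f_{\phi}\rangle e^{f_{\phi}}$ that ultimately assembles $\overline{L}\delta\phi$) has a different origin: it comes from the variation of the operator $L$ itself, i.e.\ from $(\delta L)(e^{f_{\phi}})$, not from $\delta f_{\phi}$. The paper computes $(\delta L)(e^{f_{\phi}})=-L\langle\partial\delta\phi,\partial e^{f_{\phi}}\rangle$ without ever writing down $\delta L$, by observing that the holomorphic field $Z=\mathrm{grad}_{\phi}e^{f_{\phi}}$ satisfies $i_Z(\omega_{\phi}+t\deldel\delta\phi)=\sqrt{-1}\dbar(e^{f_{\phi}}+tZ(\delta\phi))$, so that $e^{f_{\phi}}+tZ(\delta\phi)$ stays in $\Ker L_t$ (Lemma 2.4), and then differentiating $L_t(e^{f_{\phi}}+tZ(\delta\phi))=0$ at $t=0$. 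Starting from your incorrect $\dot f_{\phi}$, the bookkeeping you defer to "after the dust settles" cannot come out to $2\lla L\overline{L}\delta\phi_1,\delta\phi_2\rra$.

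A second gap: your reason for discarding the $\ddot f_{\phi}$ term is not valid. The generalized K\"ahler Einstein condition is $L(e^{f_{\phi}})=0$, not $1-e^{f_{\phi}}=0$, so a term of the shape $\int_X(1-e^{f_{\phi}})e^{f_{\phi}}\ddot f_{\phi}\,\omega_{\phi}^n$ does not vanish for free; if you insist on differentiating the raw integrand twice you must control it (e.g.\ via the second derivative of the normalization $\int_Xe^{f_{\phi}}\omega_{\phi}^n=V$). The paper avoids $\ddot f_{\phi}$ entirely by differentiating the already-established first-variation formula $\delta\mathcal{E}_{\mathrm{RC}}=2\lla L(e^{f_{\phi}}),\delta\phi_2\rra$ with the fixed function $\delta\phi_2$ in the second slot: at a critical point the variation of the volume form pairs against $L(e^{f_{\phi}})=0$, and only $(\delta_1 L)(e^{f_{\phi}})+L(\delta_1e^{f_{\phi}})$ survives. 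I would recommend adopting that route; as written, your proposal both misstates the key lemma and leaves the decisive cancellation unproved.
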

As a corollary, we can see that the Hessian $\mathrm{Hess}(\mathcal{E}_{\mathrm{RC}})$ is non-negative at every generalized K\"ahler Einstein metric.
\begin{corollary}\label{Hessian2}
At every generalized K\"ahler Einstein metric, operators $L$ and $\overline{L}$ are commutative.
As the result, their composition $L\overline{L}$ is a self-adjoint non-negative operator on $T_{\phi}\mathcal{M}(\omega)$ with respect to the inner product $\lla \cdot, \cdot \rra.$
\end{corollary}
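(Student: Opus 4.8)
The plan is to deduce the Corollary from Theorem \ref{Hessian1} together with two facts about $L$ that already hold at an \emph{arbitrary} K\"ahler metric $\phi\in\mathcal{M}(\omega)$: that $L$ is self-adjoint for $\lla\cdot,\cdot\rra$, and that $L$ is non-negative. Commutativity will come for free from Theorem \ref{Hessian1}; self-adjointness of $L\overline{L}$ is then formal; and non-negativity of $L\overline{L}$ follows because it is a product of two commuting non-negative self-adjoint operators.

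\textbf{Step 1: commutativity.} By Theorem \ref{Hessian1}, for all $\delta\phi_1,\delta\phi_2\in T_{\phi}\mathcal{M}(\omega)=C_{\mathbb{R}}^{\infty}(X)$ one has $\lla L\overline{L}\delta\phi_1,\delta\phi_2\rra=\lla \overline{L}L\delta\phi_1,\delta\phi_2\rra$. Fix $\delta\phi_1$ and write $w:=(L\overline{L}-\overline{L}L)\delta\phi_1=w_1+\sqrt{-1}\,w_2$ with $w_1,w_2\in C_{\mathbb{R}}^{\infty}(X)$. The vanishing of $\lla w,\delta\phi_2\rra=\int_X w\,\delta\phi_2\,\omega_{\phi}^n$ for \emph{every} real $\delta\phi_2$ forces $\int_X w_1\,\delta\phi_2\,\omega_{\phi}^n=\int_X w_2\,\delta\phi_2\,\omega_{\phi}^n=0$, hence $w_1=w_2=0$. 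Thus $L\overline{L}$ and $\overline{L}L$ agree on $C_{\mathbb{R}}^{\infty}(X)$, and being $\mathbb{C}$-linear they agree on $C_{\mathbb{C}}^{\infty}(X)$; that is, $L\overline{L}=\overline{L}L$. This is precisely the Matsushima-type identity, read off directly from the Hessian.

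\textbf{Step 2: $L$ and $\overline{L}$ are self-adjoint and non-negative.} Integrating by parts and absorbing the weight $e^{f_{\phi}}$, a direct computation at any $\phi\in\mathcal{M}(\omega)$ gives
$$\lla Lu,v\rra=\int_X e^{f_{\phi}}\langle\dbar u,\dbar v\rangle\,\omega_{\phi}^n-\int_X u\,\overline{v}\,e^{f_{\phi}}\omega_{\phi}^n+\frac{1}{V}\Bigl(\int_X u\,e^{f_{\phi}}\omega_{\phi}^n\Bigr)\overline{\Bigl(\int_X v\,e^{f_{\phi}}\omega_{\phi}^n\Bigr)},$$
which is manifestly a Hermitian form in $(u,v)$; hence $\lla Lu,v\rra=\lla u,Lv\rra$, and conjugating, $\lla \overline{L}u,v\rra=\lla u,\overline{L}v\rra$. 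Moreover $L$ annihilates constants: on a constant $c$ the derivative terms vanish and $-c+\frac{1}{V}\int_X c\,e^{f_{\phi}}\omega_{\phi}^n=0$ by the normalization $\int_X e^{f_{\phi}}\omega_{\phi}^n=V$ in \eqref{Riccipotential}. So in checking $\lla Lu,u\rra\ge 0$ we may assume $\int_X u\,e^{f_{\phi}}\omega_{\phi}^n=0$, and then the formula reduces to $\int_X e^{f_{\phi}}\langle\dbar u,\dbar u\rangle\,\omega_{\phi}^n-\int_X |u|^2e^{f_{\phi}}\omega_{\phi}^n$. This is $\ge 0$ by the classical estimate $\lambda_1\ge 1$ for the smallest non-zero eigenvalue of the operator $-\Delta_{\phi}-\langle\dbar\,\cdot\,,\dbar f_{\phi}\rangle$, which is self-adjoint and non-negative with respect to $e^{f_{\phi}}\omega_{\phi}^n$ and whose kernel consists of the constants: on a Fano manifold carrying a Ricci potential this bound follows from the Bochner formula, the relevant curvature being $\mathrm{Ric}(\omega_{\phi})-\deldel f_{\phi}=\omega_{\phi}>0$ (Futaki). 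Hence $L\ge 0$ and $\overline{L}\ge 0$.

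\textbf{Step 3: conclusion, and the main obstacle.} Self-adjointness of $L\overline{L}$ is now immediate: $(L\overline{L})^{*}=\overline{L}^{*}L^{*}=\overline{L}L=L\overline{L}$ by Steps 1 and 2. (Equivalently, Step 1 shows $L\overline{L}$ preserves $C_{\mathbb{R}}^{\infty}(X)$, since for real $u$ one has $L\overline{L}u=\overline{L}Lu=\overline{L\overline{L}u}$, and the Hessian is a symmetric bilinear form on $T_{\phi}\mathcal{M}(\omega)$.) For non-negativity, $L$ is a non-negative self-adjoint second-order elliptic operator on a compact manifold, so it has a non-negative self-adjoint square root $L^{1/2}$, which commutes with $\overline{L}$ because $L$ does; therefore
$$\lla L\overline{L}u,u\rra=\lla L^{1/2}\overline{L}L^{1/2}u,u\rra=\lla \overline{L}(L^{1/2}u),\,L^{1/2}u\rra\ge 0,$$
the last inequality by $\overline{L}\ge 0$. (Alternatively, diagonalize the commuting self-adjoint pair $L,\overline{L}$ simultaneously; then $L\overline{L}$ has spectrum $\{\lambda_k\mu_k\}$ with $\lambda_k,\mu_k\ge 0$.) The only delicate point is Step 2: carrying out the weighted integration by parts with the conjugations placed correctly, and invoking (or re-deriving via Bochner) the eigenvalue bound $\lambda_1\ge 1$. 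Once Theorem \ref{Hessian1} and these are in hand, the rest of the argument is purely formal.
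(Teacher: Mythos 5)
Your proof is correct and follows essentially the same route as the paper: the paper disposes of the corollary in one line as an immediate consequence of Theorem \ref{Hessian1} and Lemma \ref{L}, and your Step 2 is precisely a re-derivation of Lemma \ref{L} (self-adjointness via the weighted integration by parts and non-negativity via the Bochner eigenvalue bound $\lambda_1\ge 1$), while your Steps 1 and 3 fill in the ``immediate'' part (passing from equality of the Hermitian pairings against all real test functions to operator commutativity, and the commuting-square-root argument for non-negativity of the product). Nothing further is needed.
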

Similar results for the Hessian formula and the non-negativity of the Hessian were observed for various functionals and its critical K\"ahler metrics.
See for instance, \cite{Ca85, Wa06, Gabook} (for the Calabi functional and the extremal K\"ahler metric),  \cite{Fu08} (for a Calabi's type functional and the perturbed extremal K\"ahler metric), \cite{F16} (for the He functional and the K\"ahler Ricci soliton) and \cite{FO17} (for a Calabi's type functional and the $f$-extremal K\"ahler metric).

Let $\mathfrak{h}(X)$ be the the Lie algebra of holomorphic vector fields on $X$.
For any $u\in C^{\infty}_{\mathbb{C}}(X)$, we define the gradient vector field $\mathrm{grad}_{\phi}u$ for a K\"ahler metric $\phi\in\mathcal{M}(\omega)$ by
$$i_{\mathrm{grad}_{\phi}u}\omega_{\phi}=\sqrt{-1}\dbar u.$$
As an application of Corollary \ref{Hessian2}, we give the following Matsushima's type decomposition theorem for $\mathfrak{h}(X)$.
\begin{theorem}\label{MaLi}
Let $X$ be a Fano manifolds admitting a generalized K\"ahler Einstein metric $\phi\in\mathcal{M}(\omega)$.
Then the Lie algebra $\mathfrak{h}(X)$ is, as a vector space, the direct sum
$$\mathfrak{h}(X)=\sum_{\lambda\geq 0}\mathfrak{h}_{\lambda}(X),$$
where $\mathfrak{h}_{\lambda}(X)$ is the $\lambda$-eigenspace of the adjoint action of $-\mathrm{grad}_{\phi}e^{f_{\phi}}$.
Furthermore, $\mathfrak{h}_{0}(X)$ is the complexification of the Lie algebra of Killing vector fields on $(X,\omega_{\phi})$.
In particular, $\mathfrak{h}_{0}(X)$ is reductive.
\end{theorem}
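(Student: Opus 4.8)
The plan is to transport everything to the space of holomorphy potentials and then to diagonalise the adjoint action using Corollary \ref{Hessian2}. Since $X$ is Fano, $H^{1}(X,\mathcal{O}_X)=0$, so every $W\in\mathfrak{h}(X)$ satisfies $i_W\omega_\phi=\sqrt{-1}\dbar u_W$ for a function $u_W\in C^\infty_{\mathbb C}(X)$, unique modulo constants; normalise by $\int_X u_W\,\omega_\phi^n=0$. A standard computation (the twisted Matsushima identity), using $\mathrm{Ric}(\omega_\phi)=\omega_\phi+\deldel f_\phi$, shows that $\mathrm{grad}_\phi u$ is holomorphic precisely when $\Delta_\phi u+\langle\dbar u,\dbar f_\phi\rangle+u$ is constant; moreover $\int_X Lu\,\omega_\phi^n=0$ for every $u$ (an integration by parts that again uses the Ricci potential equation), which forces that constant to equal $\tfrac1V\int_X u e^{f_\phi}\omega_\phi^n$, so the condition becomes $Lu=0$. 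Hence $W\mapsto u_W$ is a linear isomorphism $\mathfrak{h}(X)\xrightarrow{\sim}\Ker L/\mathbb{C}$, under which the distinguished field $-\mathrm{grad}_\phi e^{f_\phi}=\mathrm{grad}_\phi(1-e^{f_\phi})$ — holomorphic exactly by the generalized K\"ahler--Einstein condition — corresponds to the class of $1-e^{f_\phi}$.

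Next I would express the Lie bracket in potentials: for holomorphy potentials $u,v$ one has $[\mathrm{grad}_\phi u,\mathrm{grad}_\phi v]=\mathrm{grad}_\phi\{u,v\}$ with $\{u,v\}=(\mathrm{grad}_\phi u)(v)-(\mathrm{grad}_\phi v)(u)$, again a holomorphy potential. Putting $v=1-e^{f_\phi}$ and comparing with the definitions of $L$ and $\overline L$ gives the clean identity
$$\{\,1-e^{f_\phi}\,,\,u\,\}=\overline L u-Lu\qquad(u\in C^\infty_{\mathbb C}(X)),$$
so on $\Ker L$ the adjoint action of $-\mathrm{grad}_\phi e^{f_\phi}$ is represented, modulo constants, by $\overline L|_{\Ker L}$; this restriction is well defined because $L$ and $\overline L$ commute (Corollary \ref{Hessian2}). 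Its kernel is $\{\,[u]:u\in\Ker L,\ \overline L u\in\mathbb{C}\,\}$; since $\int_X\overline L u\,\omega_\phi^n=0$ as well, $\overline L u$ is then forced to vanish, so the kernel equals $(\Ker L\cap\Ker\overline L)/\mathbb{C}$. But $u$ lies in $\Ker L\cap\Ker\overline L$ exactly when both $\mathrm{grad}_\phi u$ and $\mathrm{grad}_\phi\overline u$ are holomorphic, i.e. when $\mathrm{Re}\,u$ and $\mathrm{Im}\,u$ are Killing potentials on $(X,\omega_\phi)$; hence the kernel of the adjoint action is the complexification of the Lie algebra of Killing vector fields, which is reductive because the isometry group of the compact manifold $(X,\omega_\phi)$ is compact.

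It remains — and this is the heart of the argument — to show that $\overline L|_{\Ker L}$ is diagonalisable with real, non-negative spectrum, which yields $\mathfrak{h}(X)=\sum_{\lambda\geq 0}\mathfrak{h}_\lambda(X)$ with $\mathfrak{h}_0(X)$ the algebra just computed. For diagonalisability and reality I would split $-\mathrm{grad}_\phi e^{f_\phi}$ into real and imaginary parts: the generalized K\"ahler--Einstein equation together with the realness of $1-e^{f_\phi}$ makes the imaginary part a Killing field, so its adjoint action is skew-Hermitian for the restriction of $\lla\cdot,\cdot\rra$ to $\Ker L$ (isometries act unitarily on $L^2(\omega_\phi^n)$ and preserve $\Ker L$), while the real part commutes with it; combined with the commutativity of $L$ and $\overline L$ this should force semisimplicity with real spectrum. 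For the sign, the identity above together with the K\"ahler identity $\int_X\langle\partial u,\partial u\rangle\,\omega_\phi^n=\int_X\langle\dbar u,\dbar u\rangle\,\omega_\phi^n$ and an integration by parts gives, for a mean-zero $u\in\Ker L$ with $\mathrm{ad}(-\mathrm{grad}_\phi e^{f_\phi})(\mathrm{grad}_\phi u)=\lambda\,\mathrm{grad}_\phi u$,
$$\lambda\int_X|u|^2\,\omega_\phi^n=\int_X\bigl(e^{f_\phi}-1\bigr)\bigl(\langle\partial u,\partial u\rangle-\langle\dbar u,\dbar u\rangle\bigr)\,\omega_\phi^n ,$$
so one must show the right-hand side is non-negative. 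This is the main obstacle: since $L\overline L$ vanishes on $\Ker L$, the non-negativity of the Hessian from Corollary \ref{Hessian2} cannot be applied to $u$ itself, and one has to feed $\lla L\overline L\,\cdot\,,\,\cdot\,\rra\geq 0$ a function transverse to the automorphism locus built from $u$ and $e^{f_\phi}$ — alternatively, one may use that $-\mathrm{grad}_\phi e^{f_\phi}$ is the linearised generator at $\phi$ of the inverse Monge--Amp\`ere flow, along which $\mathcal{E}_{\mathrm{RC}}$ decreases, so that $\phi$ being its minimiser makes the linearisation have non-positive spectrum.
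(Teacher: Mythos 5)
Your first two paragraphs track the paper's argument closely: the identification $\mathfrak{h}(X)\simeq\Ker L/\mathbb{C}$ is Lemma \ref{Holomorphicity}, the identity $\overline{L}u-Lu=\langle\dbar u,\dbar e^{f_{\phi}}\rangle-\langle\partial u,\partial e^{f_{\phi}}\rangle=\{e^{f_{\phi}},u\}$ is exactly the computation in the paper's proof, and the identification of the kernel of the adjoint action with $(\Ker L\cap\Ker\overline{L})/\mathbb{C}$ and hence with the complexified Killing algebra is also the paper's route. The problem is your third paragraph. You correctly reduce the theorem to showing that $\overline{L}|_{\Ker L}$ is diagonalisable with real non-negative spectrum, but you then declare this "the main obstacle" and offer only uncompleted sketches: a splitting of $-\mathrm{grad}_{\phi}e^{f_{\phi}}$ into real and imaginary parts that "should force" semisimplicity, an integral identity whose sign you admit you cannot establish, and a heuristic appeal to the inverse Monge--Amp\`ere flow. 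None of these is carried out, so as written the proof is not complete.

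The missing observation is that there is no obstacle at all: Lemma \ref{L} states that $\overline{L}$ is self-adjoint and non-negative on $C^{\infty}_{\mathbb{C}}(X)$ \emph{with respect to the unweighted inner product} $\lla\cdot,\cdot\rra$ (this is precisely why the paper insists, in the Remark after Lemma \ref{Holomorphicity}, on working with $L$ on $(C^{\infty}_{\mathbb{C}}(X),\lla\cdot,\cdot\rra)$ rather than with the weighted Laplacian on the weighted space). By Corollary \ref{Hessian2}, $L$ and $\overline{L}$ commute, so $\overline{L}$ preserves $\Ker L$, which is finite dimensional since $\Ker L/\mathbb{C}\simeq\mathfrak{h}(X)$. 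The restriction of a self-adjoint non-negative operator to a finite-dimensional invariant subspace of an inner product space is Hermitian and non-negative there, hence diagonalisable with eigenvalues $\lambda\geq 0$ by the spectral theorem. That single line closes the gap you flagged; your proposed detours through Killing fields, the sign of $\int_X(e^{f_{\phi}}-1)(\langle\partial u,\partial u\rangle-\langle\dbar u,\dbar u\rangle)\,\omega_{\phi}^n$, and the flow are unnecessary. You should also double-check the sign conventions in your Poisson bracket against the definition of $\mathrm{grad}_{\phi}$, since you state $\{u,v\}$ with the opposite ordering to the one that makes $u\mapsto-\mathrm{grad}_{\phi}u$ a Lie algebra homomorphism as used in the paper.
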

This theorem was originally proved by Mabuchi in \cite[Theorem 4.1]{Ma01}.
His proof heavily depends on Futaki-Mabuchi's theory \cite{FM} of the extremal K\"ahler vector field.
Our proof is done by a simple linear algebraic argument based only on the commutativity of $L$ and $\overline{L}$.

Since K\"ahler Einstein metrics are trivial generalized K\"ahler Einstein metrics, Theorem \ref{MaLi} includes the following classical Matsushima's result \cite{Mat57} called the Matsushima's obstruction.
\begin{corollary}\label{Matsushima}
Let $X$ be a Fano manifolds admitting a K\"ahler Einstein metric.
Then the holomorphic automorphism group of $X$ is reductive.
\end{corollary}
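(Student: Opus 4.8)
The plan is to derive Corollary~\ref{Matsushima} as the special case $f_\phi\equiv 0$ of Theorem~\ref{MaLi}. The only thing that really needs checking is that a K\"ahler Einstein metric is a generalized K\"ahler Einstein metric whose associated holomorphic vector field $-\mathrm{grad}_\phi e^{f_\phi}$ vanishes identically, so that the Matsushima-type decomposition of $\mathfrak{h}(X)$ provided by Theorem~\ref{MaLi} collapses to its single reductive summand $\mathfrak{h}_0(X)$.

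First I would unwind the definitions. If $\omega_\phi\in 2\pi c_1(X)$ is K\"ahler Einstein, i.e. $\mathrm{Ric}(\omega_\phi)=\omega_\phi$, then \eqref{Riccipotential} gives $\deldel f_\phi=0$, so $f_\phi$ is constant on the compact manifold $X$; the normalization $\int_X e^{f_\phi}\omega_\phi^n=V=\int_X\omega_\phi^n$ then forces $e^{f_\phi}\equiv 1$, hence $f_\phi\equiv 0$. In particular the complex gradient of $1-e^{f_\phi}=0$ is (trivially) holomorphic, so $\omega_\phi$ is a generalized K\"ahler Einstein metric and Theorem~\ref{MaLi} applies. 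Moreover, since $e^{f_\phi}$ is constant, $i_{\mathrm{grad}_\phi e^{f_\phi}}\omega_\phi=\sqrt{-1}\,\dbar e^{f_\phi}=0$, and nondegeneracy of $\omega_\phi$ yields $\mathrm{grad}_\phi e^{f_\phi}=0$. Consequently the adjoint action of $-\mathrm{grad}_\phi e^{f_\phi}$ on $\mathfrak{h}(X)$ is identically zero, so every holomorphic vector field lies in its $0$-eigenspace: $\mathfrak{h}_\lambda(X)=0$ for all $\lambda>0$ and $\mathfrak{h}(X)=\mathfrak{h}_0(X)$. By the last assertion of Theorem~\ref{MaLi}, $\mathfrak{h}_0(X)$ is the complexification of the Lie algebra of Killing fields of $(X,\omega_\phi)$, hence a reductive Lie algebra; therefore $\mathfrak{h}(X)$ is reductive.

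It then remains to pass from the Lie algebra to the group: the holomorphic automorphism group $\mathrm{Aut}(X)$ of the Fano manifold $X$ is a complex Lie group --- in fact a linear algebraic group --- whose Lie algebra is $\mathfrak{h}(X)$, and a connected such group is reductive exactly when its Lie algebra is reductive; hence the identity component $\mathrm{Aut}^0(X)$, and with it $\mathrm{Aut}(X)$, is reductive. I do not anticipate any genuine obstacle in this argument: all the substantive content --- the eigenspace decomposition and the reductivity of $\mathfrak{h}_0(X)$ --- is already contained in Theorem~\ref{MaLi}, and what remains is the routine identification of the K\"ahler Einstein condition with the vanishing of $f_\phi$, together with the standard dictionary between reductivity of a Lie algebra and of the corresponding Lie group.
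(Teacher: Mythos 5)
Your proposal is correct and follows exactly the paper's own route: the paper likewise observes that for a K\"ahler Einstein metric $\mathrm{grad}_{\phi}e^{f_{\phi}}$ vanishes, so Theorem \ref{MaLi} gives $\mathfrak{h}(X)=\mathfrak{h}_0(X)$, which is reductive. You merely spell out the intermediate steps (that $f_{\phi}\equiv 0$ and the passage from the Lie algebra to the automorphism group) that the paper leaves implicit.
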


The work of Yao \cite{Ya} brings many interests to study about related topics of generalized K\"ahler Einstein metrics.
See \cite{N, Na3} (for the modified Ding functional of toric Fano manifolds), \cite{LZ} (for the modified Ding functional of general Fano manifolds), \cite{NSY} (for relative GIT stabilities).
In particular, Collins-Hisamoto-Takahashi \cite{CHT} developed a geometric flow, called the {\it inverse Monge-Amp\`ere flow}, whose self similar solutions are generalized K\"ahler Einstein metrics.
In section \ref{Ding flow}, we discuss a relation between Corollary \ref{Hessian2} and this flow.

\noindent{\bf Acknowledgements.}
The author would like to thank Professor Shigetoshi Bando, Professor Shunsuke Saito and Doctor Ryosuke Takahashi for their several helpful comments and constant encouragement.
\section{Variations of the Ricci Calabi functional}\label{variation}
The Ricci Calabi functional can be written in terms of $\phi\in\mathcal{M}(\omega)$ as
$$\mathcal{E}_{\mathrm{RC}}(\phi)=\int_X(1-e^{f_{\phi}})^2\omega_{\phi}^n.$$
First we compute the variation of the Ricci potential to obtain the first variation of the Ricci Calabi functional.
\begin{lemma}
For any $\phi\in\mathcal{M}(\omega)$ and any direction $\delta\phi\in T_{\phi}\mathcal{M}(\omega)$, we have
$$\delta f_{\phi} = -\Delta_{\phi}\delta\phi-\delta\phi + \frac{1}{V}\int_X \delta\phi e^{f_{\phi}}\omega_{\phi}^n.$$
\end{lemma}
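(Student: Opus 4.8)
The plan is to differentiate the two defining relations \eqref{Riccipotential} for the Ricci potential along the path $t\mapsto\phi+t\,\delta\phi$ and solve the resulting equation for $\delta f_{\phi}$. Before doing so I would record the two first-order variational identities that drive the computation. Writing $\omega_{\phi}=\sqrt{-1}g_{i\bar j}dz^i\wedge d\bar z^j$, the relation $\omega_{\phi}=\omega+\deldel\phi$ gives $\delta g_{i\bar j}=\partial_i\partial_{\bar j}\,\delta\phi$, and hence, by the Jacobi formula for the variation of $\log\det$,
$$\delta\log\det\omega_{\phi}^{n}=g^{i\bar j}\partial_i\partial_{\bar j}\,\delta\phi=\Delta_{\phi}\,\delta\phi ,$$
where $\Delta_{\phi}$ is the (negative) Laplacian as in the statement; equivalently $\delta(\omega_{\phi}^{n})=(\Delta_{\phi}\,\delta\phi)\,\omega_{\phi}^{n}$, which can also be read off from $n\,\deldel\,\delta\phi\wedge\omega_{\phi}^{n-1}=(\Delta_{\phi}\,\delta\phi)\,\omega_{\phi}^{n}$.

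Next I would vary the equation $\mathrm{Ric}(\omega_{\phi})-\omega_{\phi}=\deldel f_{\phi}$. Since $\mathrm{Ric}(\omega_{\phi})=-\deldel\log\det\omega_{\phi}^{n}$, the identity above gives $\delta\,\mathrm{Ric}(\omega_{\phi})=-\deldel(\Delta_{\phi}\,\delta\phi)$, while $\delta\omega_{\phi}=\deldel\,\delta\phi$. Substituting, the variation of the first relation in \eqref{Riccipotential} becomes
$$\deldel\bigl(\delta f_{\phi}+\Delta_{\phi}\,\delta\phi+\delta\phi\bigr)=0 .$$
Because $X$ is compact K\"ahler, a real-valued function annihilated by $\deldel$ is constant (it is both plurisubharmonic and plurisuperharmonic, hence harmonic, hence constant), so $\delta f_{\phi}+\Delta_{\phi}\,\delta\phi+\delta\phi=c$ for some constant $c\in\mathbb{R}$.

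It remains to identify $c$, which is the only step carrying any content: I would differentiate the normalization $\int_X e^{f_{\phi}}\omega_{\phi}^{n}=V$. Using $\delta(\omega_{\phi}^{n})=(\Delta_{\phi}\,\delta\phi)\,\omega_{\phi}^{n}$ this yields
$$\int_X\bigl(\delta f_{\phi}+\Delta_{\phi}\,\delta\phi\bigr)e^{f_{\phi}}\omega_{\phi}^{n}=0 ,$$
and inserting $\delta f_{\phi}+\Delta_{\phi}\,\delta\phi=c-\delta\phi$ gives $cV=\int_X\delta\phi\,e^{f_{\phi}}\omega_{\phi}^{n}$, i.e. $c=\tfrac{1}{V}\int_X\delta\phi\,e^{f_{\phi}}\omega_{\phi}^{n}$. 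Feeding this back into $\delta f_{\phi}=c-\Delta_{\phi}\,\delta\phi-\delta\phi$ produces exactly the claimed formula. There is no real obstacle here; the only point requiring care is keeping the sign convention for $\Delta_{\phi}$ consistent (chosen so that $\delta\log\det\omega_{\phi}^{n}=\Delta_{\phi}\,\delta\phi$) throughout the two differentiations.
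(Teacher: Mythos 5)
Your proposal is correct and follows exactly the paper's argument: vary the first relation in \eqref{Riccipotential} to get $\delta f_{\phi}=-\Delta_{\phi}\delta\phi-\delta\phi+C$, then determine the constant by varying the normalization $\int_Xe^{f_{\phi}}\omega_{\phi}^n=V$. You merely spell out the intermediate identities ($\delta\log\det\omega_{\phi}^n=\Delta_{\phi}\delta\phi$ and the $\deldel$-harmonicity argument) that the paper leaves implicit.
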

\begin{proof}
By taking the variation of the first equation in \eqref{Riccipotential}, we have
$\delta f_{\phi}=-\Delta_{\phi}\delta\phi -\delta\phi+C$ for some constant $C$.
The constant $C$ is equals to $\frac{1}{V}\int_X \delta\phi e^{f_{\phi}}\omega_{\phi}^n$ 
by the variation $\int_X(\delta f_{\phi} + \Delta\delta\phi)e^{f_{\phi}}\omega_{\phi}^n=0$ of the second equation in \eqref{Riccipotential}.
\end{proof}
\begin{lemma}\label{first variation}{\rm (c.f. \cite[Proof of Theorem 1]{Ya})}
For any $\phi\in\mathcal{M}(\omega)$ and any direction $\delta\phi\in T_{\phi}\mathcal{M}(\omega)$, we have
$$\delta\mathcal{E}_{\mathrm{RC}}(\delta\phi)=2\lla L(e^{f_{\phi}}), \delta\phi\rra = 2\lla \overline{L}(e^{f_{\phi}}),\delta\phi \rra.$$
\end{lemma}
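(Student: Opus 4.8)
The plan is to compute the first variation of $\mathcal{E}_{\mathrm{RC}}(\phi) = \int_X(1-e^{f_\phi})^2\omega_\phi^n$ directly by differentiating along the path $\phi_t = \phi + t\,\delta\phi$ and using the variational formula for $f_\phi$ established in the previous lemma. There are two pieces: the variation of the integrand $(1-e^{f_\phi})^2$, and the variation of the volume form $\omega_\phi^n$. For the first, $\delta\big((1-e^{f_\phi})^2\big) = -2(1-e^{f_\phi})e^{f_\phi}\,\delta f_\phi$, and we substitute $\delta f_\phi = -\Delta_\phi\delta\phi - \delta\phi + \frac{1}{V}\int_X \delta\phi\, e^{f_\phi}\omega_\phi^n$. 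For the second, the standard identity $\delta(\omega_\phi^n) = (\Delta_\phi \delta\phi)\,\omega_\phi^n$ holds (up to the usual normalization of $\Delta_\phi$ used here). Putting these together,
$$\delta\mathcal{E}_{\mathrm{RC}}(\delta\phi) = \int_X \Big[-2(1-e^{f_\phi})e^{f_\phi}\,\delta f_\phi + (1-e^{f_\phi})^2\,\Delta_\phi\delta\phi\Big]\omega_\phi^n.$$

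Next I would manipulate this into the claimed pairing $2\lla L(e^{f_\phi}),\delta\phi\rra = 2\int_X L(e^{f_\phi})\,\delta\phi\,\omega_\phi^n$ (note $\delta\phi$ is real, so no conjugation issue). Recall $L(e^{f_\phi}) = \big(-\Delta_\phi e^{f_\phi} - \langle\dbar e^{f_\phi},\dbar f_\phi\rangle - e^{f_\phi} + \frac{1}{V}\int_X e^{2f_\phi}\omega_\phi^n\big)e^{f_\phi}$. The key observation is that $\langle \dbar e^{f_\phi}, \dbar f_\phi\rangle = e^{f_\phi}\langle\dbar f_\phi,\dbar f_\phi\rangle = e^{f_\phi}|\dbar f_\phi|^2$, and $\Delta_\phi e^{f_\phi} = e^{f_\phi}(\Delta_\phi f_\phi + |\dbar f_\phi|^2)$ (again up to the normalization convention), so $-\Delta_\phi e^{f_\phi} - \langle\dbar e^{f_\phi},\dbar f_\phi\rangle = -e^{f_\phi}\Delta_\phi f_\phi - 2e^{f_\phi}|\dbar f_\phi|^2$. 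The plan is therefore to transform the expression for $\delta\mathcal{E}_{\mathrm{RC}}$ by integration by parts: move the Laplacian off $\delta\phi$ in the term $\int_X (1-e^{f_\phi})^2\Delta_\phi\delta\phi\,\omega_\phi^n$, giving $\int_X \Delta_\phi\big((1-e^{f_\phi})^2\big)\,\delta\phi\,\omega_\phi^n$, and similarly rewrite $\int_X (1-e^{f_\phi})e^{f_\phi}\Delta_\phi\delta\phi\,\omega_\phi^n$ as $\int_X \Delta_\phi\big((1-e^{f_\phi})e^{f_\phi}\big)\delta\phi\,\omega_\phi^n$. After these integrations by parts every term is of the form $(\text{function of }f_\phi)\cdot\delta\phi\,\omega_\phi^n$, and one then checks that the coefficient of $\delta\phi$ equals $2L(e^{f_\phi})$ by expanding the Laplacians of the composite functions via the chain rule and matching with the expression for $L(e^{f_\phi})$ above.

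For the constant term, note $\frac{1}{V}\int_X\delta\phi\,e^{f_\phi}\omega_\phi^n$ appearing in $\delta f_\phi$ contributes $\frac{2}{V}\big(\int_X(1-e^{f_\phi})e^{f_\phi}\omega_\phi^n\big)\big(\int_X\delta\phi\,e^{f_\phi}\omega_\phi^n\big)$, and using $\int_X e^{f_\phi}\omega_\phi^n = V$ this simplifies to $\frac{2}{V}\big(V - \int_X e^{2f_\phi}\omega_\phi^n\big)\int_X\delta\phi\,e^{f_\phi}\omega_\phi^n = 2\int_X\delta\phi\,e^{f_\phi}\omega_\phi^n - \frac{2}{V}\big(\int_X e^{2f_\phi}\omega_\phi^n\big)\int_X\delta\phi\,e^{f_\phi}\omega_\phi^n$, which is precisely $2\int_X\big(e^{f_\phi} \cdot(-e^{f_\phi}) + e^{f_\phi}\cdot\frac{1}{V}\int_X e^{2f_\phi}\omega_\phi^n\big)\delta\phi\,\omega_\phi^n$ matching the corresponding part of $2L(e^{f_\phi})$. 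Finally, the second equality $2\lla L(e^{f_\phi}),\delta\phi\rra = 2\lla\overline{L}(e^{f_\phi}),\delta\phi\rra$ follows because $e^{f_\phi}$ is real, so $\langle\dbar e^{f_\phi},\dbar f_\phi\rangle$ and its conjugate $\langle\partial e^{f_\phi},\partial f_\phi\rangle$ agree (both equal the real quantity $e^{f_\phi}|\nabla f_\phi|^2$ in the Hermitian metric), hence $L(e^{f_\phi}) = \overline{L}(e^{f_\phi})$ pointwise; alternatively it follows from the fact that $\delta\mathcal{E}_{\mathrm{RC}}(\delta\phi)$ is real for real $\delta\phi$. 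The main obstacle is bookkeeping: carefully tracking the normalization of $\Delta_\phi$ and the sign conventions in $\langle\cdot,\cdot\rangle$ so that the chain-rule expansions of $\Delta_\phi(e^{f_\phi})$, $\Delta_\phi\big((1-e^{f_\phi})e^{f_\phi}\big)$ and $\Delta_\phi\big((1-e^{f_\phi})^2\big)$ combine to exactly $2L(e^{f_\phi})$ with no residual terms; everything else is routine integration by parts.
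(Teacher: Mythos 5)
Your proposal follows essentially the same route as the paper: differentiate under the integral using $\delta(\omega_{\phi}^n)=(\Delta_{\phi}\delta\phi)\,\omega_{\phi}^n$, substitute the formula for $\delta f_{\phi}$, and integrate by parts to move the Laplacian onto functions of $f_{\phi}$ so that the coefficient of $\delta\phi$ is identified with $2L(e^{f_{\phi}})$ (the paper streamlines the start by noting that $\int_X\omega_{\phi}^n$ and $\int_X e^{f_{\phi}}\omega_{\phi}^n$ are both constant, so effectively only $\int_X e^{2f_{\phi}}\omega_{\phi}^n$ needs to be varied, but this is cosmetic). One bookkeeping correction: the constant $c:=\frac{1}{V}\int_X\delta\phi\, e^{f_{\phi}}\omega_{\phi}^n$ in $\delta f_{\phi}$ contributes $-2c\int_X(1-e^{f_{\phi}})e^{f_{\phi}}\omega_{\phi}^n$ (note the minus sign), and this must be combined with the term $+2\int_X(1-e^{f_{\phi}})e^{f_{\phi}}\delta\phi\,\omega_{\phi}^n$ coming from the $-\delta\phi$ part of $\delta f_{\phi}$ before it matches the last two terms of $2\lla L(e^{f_{\phi}}),\delta\phi\rra$; as written, your displayed chain of equalities in that paragraph does not balance, though the corrected totals do give exactly $-2\int_X e^{2f_{\phi}}\delta\phi\,\omega_{\phi}^n+\frac{2}{V}\bigl(\int_X e^{2f_{\phi}}\omega_{\phi}^n\bigr)\int_X e^{f_{\phi}}\delta\phi\,\omega_{\phi}^n$ as required.
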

\begin{proof}
By integrations by parts (see also \eqref{Lapf1} and \eqref{Lapf2} in Lemma \ref{L}), note that
\begin{eqnarray*}
\int_X e^{2f_{\phi}}\Delta_{\phi}\delta\phi\omega_{\phi}^n 
&=&\int_X 2(\Delta_{\phi}e^{f_{\phi}} + \langle \dbar e^{f_{\phi}},\dbar f_{\phi}\rangle)\delta\phi e^{f_{\phi}}\omega_{\phi}^n   \\
&=&\int_X 2(\Delta_{\phi}e^{f_{\phi}} + \langle \partial e^{f_{\phi}},\partial f_{\phi}\rangle)\delta\phi e^{f_{\phi}}\omega_{\phi}^n.
\end{eqnarray*}
We then have
\begin{eqnarray*}
\delta\mathcal{E}_{\mathrm{RC}}(\delta\phi)
&=& \int_X e^{2f_{\phi}}2\delta f_{\phi} \omega_{\phi}^n + e^{2f_{\phi}} \Delta_{\phi}\delta\phi\omega_{\phi}^n \\
&=& \int_X 2e^{2f_{\phi}}\Bigl( -\Delta_{\phi}\delta\phi-\delta\phi 
        + \frac{1}{V}\int_X \delta\phi e^{f_{\phi}}\omega_{\phi}^n \Bigr)\omega_{\phi}^n 
        + e^{2f_{\phi}} \Delta_{\phi}\delta\phi\omega_{\phi}^n \\
&=& \int_X 2\Bigl( -\Delta_{\phi}e^{f_{\phi}} -\langle \dbar e^{f_{\phi}},\dbar f_{\phi}\rangle -e^{f_{\phi}} 
        + \frac{1}{V}\int_X e^{f_{\phi}} e^{f_{\phi}}\omega_{\phi}^n \Bigr)\delta\phi e^{f_{\phi}}\omega_{\phi}^n \\
&=& 2\lla L(e^{f_{\phi}}), \delta\phi\rra.
\end{eqnarray*}
Similarly we have
$\delta\mathcal{E}_{\mathrm{RC}}(\delta\phi)= 2\lla \overline{L}(e^{f_{\phi}}),\delta\phi \rra.$
\end{proof}
The followings are fundamental properties for operators $L$ and $\bar{L}$.
\begin{lemma}\label{L}
$L$ and $\overline{L}$ are self-adjoint non-nagative operators on $C^{\infty}_{\mathbb{C}}(X)$ with respect to the inner product $\lla \cdot, \cdot \rra.$
\end{lemma}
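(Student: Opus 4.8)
The plan is to verify the two assertions — self-adjointness and non-negativity — by a direct integration-by-parts computation, isolating a single algebraic identity that does most of the work. Write $Lu = (Pu)\,e^{f_\phi}$, where $Pu := -\Delta_\phi u - \langle \dbar u, \dbar f_\phi\rangle - u + \frac{1}{V}\int_X u\,e^{f_\phi}\omega_\phi^n$. The key observation is that $P$ is a weighted Laplace-type operator: because $\mathrm{Ric}(\omega_\phi) - \omega_\phi = \deldel f_\phi$, one has the Bochner–Kodaira–type identity $\Delta_\phi u + \langle \dbar u, \dbar f_\phi\rangle = e^{-f_\phi}\,\mathrm{div}\!\big(e^{f_\phi}\,\mathrm{grad}^{1,0}_\phi u\big)$, so that $P$ is (up to the zeroth-order and constant terms) the $e^{f_\phi}$-weighted Laplacian $\Delta_{f_\phi}$. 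Hence, for $u,v\in C^\infty_{\mathbb C}(X)$, integrating by parts against the weighted volume $e^{f_\phi}\omega_\phi^n$ gives
$$\int_X (\Delta_\phi u + \langle \dbar u,\dbar f_\phi\rangle)\,\bar v\, e^{f_\phi}\omega_\phi^n = -\int_X \langle \dbar u, \dbar v\rangle\, e^{f_\phi}\omega_\phi^n,$$
which is symmetric in the roles of $u$ and $\bar v$ in the appropriate sense. This is precisely the content of formulas \eqref{Lapf1} and \eqref{Lapf2} invoked in Lemma \ref{first variation}, so I will record it first and then feed it into the main computation.

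Next I would compute $\lla Lu, v\rra = \int_X (Pu)\,\bar v\, e^{f_\phi}\omega_\phi^n$ term by term. The Laplacian-plus-drift term becomes $-\int_X \langle \dbar u,\dbar v\rangle\, e^{f_\phi}\omega_\phi^n$ by the identity above; the term $-\int_X u\bar v\, e^{f_\phi}\omega_\phi^n$ is manifestly symmetric; and the constant term $\frac{1}{V}\big(\int_X u e^{f_\phi}\omega_\phi^n\big)\big(\int_X \bar v e^{f_\phi}\omega_\phi^n\big)$ is also symmetric under $u\leftrightarrow v$ together with complex conjugation. Collecting,
$$\lla Lu, v\rra = -\int_X \langle \dbar u,\dbar v\rangle\, e^{f_\phi}\omega_\phi^n - \int_X u\bar v\, e^{f_\phi}\omega_\phi^n + \tfrac{1}{V}\Big(\int_X u e^{f_\phi}\omega_\phi^n\Big)\overline{\Big(\int_X v e^{f_\phi}\omega_\phi^n\Big)}.$$
Wait — the $-\int u\bar v\, e^{f_\phi}$ term has the wrong sign to give non-negativity, so the correct normalization must make the first variation computation in Lemma \ref{first variation} consistent; I will re-examine the sign conventions in $Pu$ and, if needed, observe that the spectral estimate $\int_X \langle\dbar u,\dbar u\rangle e^{f_\phi}\omega_\phi^n \geq \int_X |u|^2 e^{f_\phi}\omega_\phi^n - \frac{1}{V}|\int_X u e^{f_\phi}\omega_\phi^n|^2$ — a Lichnerowicz-type lower bound for the first eigenvalue of the weighted Laplacian on a Fano manifold (following from the Bochner formula and $\mathrm{Ric}_{f_\phi} = \mathrm{Ric}(\omega_\phi) - \deldel f_\phi = \omega_\phi > 0$) — is exactly what is needed. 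Since the right-hand side above is visibly conjugate-symmetric under $u\leftrightarrow v$, this establishes $\lla Lu, v\rra = \overline{\lla Lv, u\rra} = \lla u, Lv\rra$, i.e. self-adjointness of $L$; the identical argument with $\partial$ in place of $\dbar$ handles $\overline L$.

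Finally, setting $v = u$ in the formula gives $\lla Lu, u\rra = -\int_X |\dbar u|^2 e^{f_\phi}\omega_\phi^n - \int_X |u|^2 e^{f_\phi}\omega_\phi^n + \frac{1}{V}|\int_X u e^{f_\phi}\omega_\phi^n|^2$ (or the sign-corrected version thereof), and the Lichnerowicz–Matsushima lower bound for the weighted Laplacian $\Delta_{f_\phi}$ on the Fano manifold $X$ — where the Bakry–Émery Ricci curvature equals $\omega_\phi$ — forces this quantity to have the sign claimed, with equality characterized by $u$ lying in the first eigenspace, equivalently $\mathrm{grad}_\phi u$ holomorphic. The main obstacle I anticipate is purely bookkeeping: getting every sign and the placement of the weight $e^{f_\phi}$ exactly right so that the Lichnerowicz estimate applies in the stated direction, since a single sign error flips non-negativity to non-positivity; the analytic input (integration by parts, the Bochner formula under positive Bakry–Émery Ricci curvature) is entirely standard once the bookkeeping is pinned down.
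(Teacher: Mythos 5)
Your proposal is correct and follows essentially the same route as the paper: self-adjointness comes from integration by parts against the weighted measure $e^{f_{\phi}}\omega_{\phi}^n$ (the paper's \eqref{Lapf1}--\eqref{Lapf2}), and non-negativity comes from the weighted Bochner/Lichnerowicz bound --- the paper invokes Futaki's identity $\int_X\abs{\Delta_{\phi}u+\langle\dbar u,\dbar f_{\phi}\rangle}^2e^{f_{\phi}}\omega_{\phi}^n=\int_X\abs{\nabla_{\bar i}\nabla_{\bar j}u}^2e^{f_{\phi}}\omega_{\phi}^n+\int_X\abs{\dbar u}^2e^{f_{\phi}}\omega_{\phi}^n$, which is exactly your first-eigenvalue estimate $\lambda_1\geq 1$ for the weighted $\dbar$-Laplacian with Bakry--\'Emery Ricci curvature equal to $\omega_{\phi}$. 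The only slip is the sign you already flagged, but it sits in the gradient term rather than the zeroth-order one: the correct formula is $\lla Lu,v\rra=+\int_X\langle\dbar u,\dbar v\rangle e^{f_{\phi}}\omega_{\phi}^n-\int_Xu\bar v\,e^{f_{\phi}}\omega_{\phi}^n+\frac{1}{V}\bigl(\int_Xue^{f_{\phi}}\omega_{\phi}^n\bigr)\overline{\bigl(\int_Xve^{f_{\phi}}\omega_{\phi}^n\bigr)}$ (the $-\int u\bar v$ term is genuinely present and is precisely why one needs the eigenvalue bound $1$ rather than $0$), after which your Lichnerowicz inequality gives $\lla Lu,u\rra\geq 0$ as claimed.
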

\begin{proof}
The operator $u\mapsto \Delta_{\phi}u +\langle\dbar u, \dbar f_{\phi}\rangle$ can be seen as a (nagative) Laplacian with respect to a weighted inner product
$\lla u, v \rra_f := \int_X u\bar{v}e^{f_{\phi}}\omega_{\phi}$ on $C^{\infty}_{\mathbb{C}}(X)$,
and the operator $u\mapsto \Delta_{\phi}u +\langle\partial u,\partial f_{\phi}\rangle$ is its complex conjugate.
Indeed we have
\begin{eqnarray}\label{Lapf1}
\lla \Delta_{\phi}u +\langle\dbar u, \dbar f_{\phi}\rangle, v \rra_f 
&=&\int_X(-\langle \dbar u, \dbar(ve^{f_{\phi}})\rangle + \langle \dbar u, \dbar(e^{f_{\phi}})\rangle)\omega_{\phi}^n \\
&=&\int_X -\langle \dbar u, \dbar v \rangle e^{f_{\phi}}\omega_{\phi}^n \nonumber,
\end{eqnarray}
and
\begin{eqnarray}\label{Lapf2}
\lla u, \Delta_{\phi}v +\langle\dbar v, \dbar f_{\phi}\rangle \rra_f 
&=&\int_X(-\langle \dbar (ue^{f_{\phi}}), \dbar v\rangle + \langle \dbar e^{f_{\phi}}, \dbar v \rangle)\omega_{\phi}^n \\
&=&\int_X -\langle \dbar u, \dbar v \rangle e^{f_{\phi}}\omega_{\phi}^n \nonumber.
\end{eqnarray}
It follows that $L$ and $\overline{L}$ are self-adjoint operators with respect to $\lla \cdot, \cdot \rra.$
Furthermore the following Bochner type formula \cite[Proof of Theorem 2.4.3]{Fubook} (see also \cite[Proposition 2]{Ya}) holds:
$$\int_X | \Delta_{\phi}u +\langle\dbar u, \dbar f_{\phi}\rangle |^2e^{f_{\phi}}\omega_{\phi}^n
= \int_X | \nabla_{\bar{i}}\nabla_{\bar{j}}u |^2e^{f_{\phi}}\omega_{\phi}^n
+\int_X | \dbar u |^2e^{f_{\phi}}\omega_{\phi}^n.$$
Thus the first eigenvalue of the operator $u\mapsto -\Delta_{\phi}u -\langle\dbar u, \dbar f_{\phi}\rangle$ is greater than or equal to $1$.
It follows that $L$ and $\overline{L}$ are non-negative operators with respect to $\lla \cdot, \cdot \rra.$
\end{proof}
The following lemma is crucial for Theorem \ref{Hessian1}.
\begin{lemma}\label{Holomorphicity}
There is an vector space isomorphism:
$$\Set{u\in C^{\infty}_{\mathbb{C}}(X) | Lu=0}/\mathbb{C} \simeq \mathfrak{h}(X).$$
In particular, the isomorphism is given by taking the gradient $u\mapsto\mathrm{grad}_{\phi}u$.
\end{lemma}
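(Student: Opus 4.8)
The plan is to work with the linear map $\Phi\colon \Set{u\in C^{\infty}_{\mathbb{C}}(X) | Lu=0}\to\mathfrak{h}(X)$ given by $u\mapsto\mathrm{grad}_{\phi}u$, and to show that $\ker\Phi$ is exactly the constants $\mathbb{C}$ and that $\Phi$ is surjective; the induced map on $\Set{u | Lu=0}/\mathbb{C}$ is then the asserted isomorphism. The reason both features hold is the standard second-order characterization of holomorphy potentials, and the input we borrow from Lemma \ref{L} is precisely the Bochner-type identity appearing in its proof.

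First I would unwind $Lu=0$. Since $c:=\frac{1}{V}\int_X u e^{f_{\phi}}\omega_{\phi}^n$ is a constant, $Lu=0$ is equivalent to $-\Delta_{\phi}u-\langle\dbar u,\dbar f_{\phi}\rangle=u-c$, and applying the operator $P:=-\Delta_{\phi}(\cdot)-\langle\dbar(\cdot),\dbar f_{\phi}\rangle$ to $u-c$ shows that $u-c$ is an eigenfunction of $P$ with eigenvalue $1$. By the Bochner-type identity recalled in the proof of Lemma \ref{L}, any eigenvalue-$1$ eigenfunction $v$ of $P$ satisfies $\int_X|\nabla_{\bar{i}}\nabla_{\bar{j}}v|^2 e^{f_{\phi}}\omega_{\phi}^n=0$, hence $\nabla_{\bar{i}}\nabla_{\bar{j}}u=\nabla_{\bar{i}}\nabla_{\bar{j}}(u-c)=0$ pointwise; this is exactly the condition that the $(1,0)$-vector field $\mathrm{grad}_{\phi}u=g^{i\bar{j}}u_{\bar{j}}\partial_{z^i}$ be holomorphic, so $\Phi$ is well defined. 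Its kernel is $\Set{u | Lu=0,\ \dbar u=0}$, and $\dbar u=0$ on a compact connected $X$ forces $u$ constant; conversely every constant lies in the kernel of $L$. Hence $\Phi$ descends to an injection $\Set{u | Lu=0}/\mathbb{C}\hookrightarrow\mathfrak{h}(X)$.

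For surjectivity, given $Y\in\mathfrak{h}(X)$ I would first observe that $i_Y\omega_{\phi}$ is a $(0,1)$-form which is $\dbar$-closed: writing $i_Y\omega_{\phi}=\sqrt{-1}g_{i\bar{j}}Y^i d\bar{z}^j$, holomorphicity of $Y$ gives $\partial_{\bar{k}}Y^i=0$, and the Kähler identity $\partial_{\bar{k}}g_{i\bar{j}}=\partial_{\bar{j}}g_{i\bar{k}}$ then yields $\dbar(i_Y\omega_{\phi})=0$. Since $X$ is Fano, $H^1(X,\mathcal{O}_X)=0$ (Kodaira--Nakano vanishing applied to $K_X\otimes K_X^{-1}$, or rational connectedness together with Hodge symmetry), so by the Dolbeault isomorphism $i_Y\omega_{\phi}=\sqrt{-1}\dbar u$ for some $u\in C^{\infty}_{\mathbb{C}}(X)$, i.e.\ $\mathrm{grad}_{\phi}u=Y$. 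It remains to verify the converse of the implication used above: if $\nabla_{\bar{i}}\nabla_{\bar{j}}u=0$ then $Lu=0$. For this I would compute $\nabla_{\bar{k}}(\Delta_{\phi}u+\langle\dbar u,\dbar f_{\phi}\rangle+u)$ directly. The term containing $\nabla_{\bar{k}}\nabla_{\bar{j}}u$ vanishes; commuting the third covariant derivative past the first produces a curvature contraction equal to $-g^{p\bar{q}}R_{p\bar{k}}u_{\bar{q}}$; and differentiating $\langle\dbar u,\dbar f_{\phi}\rangle$ and using the Ricci potential equation in the form $\nabla_p\nabla_{\bar{k}}f_{\phi}=R_{p\bar{k}}-g_{p\bar{k}}$ produces $g^{p\bar{q}}R_{p\bar{k}}u_{\bar{q}}-u_{\bar{k}}$; together with the leftover $u_{\bar{k}}$ these cancel, so $\Delta_{\phi}u+\langle\dbar u,\dbar f_{\phi}\rangle+u$ is antiholomorphic, hence constant. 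Pairing this constant against $e^{f_{\phi}}\omega_{\phi}^n$ and using $\int_X Pu\, e^{f_{\phi}}\omega_{\phi}^n=0$ (self-adjointness of $P$ with respect to $\lla\cdot,\cdot\rra_f$, as in \eqref{Lapf1}) identifies it as $c$, giving $Lu=0$. Thus $\Phi([u])=Y$ and $\Phi$ is surjective, completing the isomorphism.

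The main obstacle is the last Weitzenböck-type computation, that is, the converse implication $\nabla_{\bar{i}}\nabla_{\bar{j}}u=0\Rightarrow Lu=0$: the identity in the proof of Lemma \ref{L} only delivers one inequality (first eigenvalue $\geq 1$), so recovering the equation requires commuting the third derivative carefully and feeding in the Ricci potential identity with the correct sign. A secondary point needing care is the surjectivity input $H^1(X,\mathcal{O}_X)=0$, which is precisely where the Fano hypothesis is used; with it, $i_Y\omega_{\phi}$ is automatically $\dbar$-exact and every holomorphic vector field has a potential.
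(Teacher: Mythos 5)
Your proof is correct and follows essentially the same route as the paper: both reduce $Lu=0$ to the eigenvalue-one equation $-\Delta_{\phi}u-\langle\dbar u,\dbar f_{\phi}\rangle=u-c$ for the weighted Laplacian and then identify that eigenspace with $\mathfrak{h}(X)$ via $u\mapsto\mathrm{grad}_{\phi}u$. The only difference is that the paper outsources this identification entirely to \cite[Theorem 2.4.3]{Fubook}, whereas you reprove it from scratch (the Bochner identity combined with $\lla Pv,v\rra_f=\int_X|\dbar v|^2e^{f_{\phi}}\omega_{\phi}^n$ for one direction; $H^1(X,\mathcal{O}_X)=0$ and the Weitzenb\"ock computation for surjectivity), which is precisely the content of the cited theorem.
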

\begin{proof}
By definition of $L$, the space $\Set{u\in C^{\infty}_{\mathbb{C}}(X) | Lu=0}/\mathbb{C}$ can be identified with 
$$\Set{u\in C^{\infty}_{\mathbb{C}}(X) | \Delta_{\phi}u +\langle\dbar u, \dbar f_{\phi}\rangle=u}$$
through $u\mapsto u-\frac{1}{V}\int_Xue^{f_{\phi}}\omega_{\phi}^n$.
By \cite[Theorem 2.4.3]{Fubook} (see also \cite[Corollary 4]{Ya}), this can be identified with the space of holomorphic vector fields on $X$ through $u\mapsto\mathrm{grad}_{\phi}u$.
\end{proof}
\begin{remark}
In views of the above lemmas for the operator $L$, 
it is natural to consider the operator $u \mapsto -\Delta_{\phi}u -\langle \dbar u, \dbar f_{\phi}\rangle -u + \frac{1}{V}\int_X u e^{f_{\phi}}\omega_{\phi}^n$ on the vector space with the weighted inner product $(C^{\infty}_{\mathbb{C}}(X), \lla\cdot,\cdot\rra_f)$ instead of the operator $L$ on $(C^{\infty}_{\mathbb{C}}(X), \lla\cdot,\cdot\rra)$.
However, it is technically essential to consider $L$ on $(C^{\infty}_{\mathbb{C}}(X), \lla\cdot,\cdot\rra)$ in the proofs of Theorem \ref{Hessian1} and Theorem \ref{MaLi}. 
\end{remark}
The Lemma \ref{first variation} and Lemma \ref{Holomorphicity} show that  
every critical point of the Ricci Calabi functional defines a generalized K\"ahler Einstein metric and vice versa (c.f. \cite[Theorem 1]{Ya}). 
Indeed, every critical point $\phi\in\mathcal{M}(\omega)$ of $\mathcal{E}_{\mathrm{RC}}$ satisfies $L(e^{f_{\phi}})=0$, and $e^{f_{\phi}}$ defines a holomorphic vector field on $X$.

We now prove Theorem \ref{Hessian1}.

\noindent{\bf Proof of Theorem \ref{Hessian1}.}
We compute the variation $(\delta L)(e^{f_{\phi}})$ at a generalized K\"ahler Einstein metric $\phi\in\mathcal{M}(\omega)$ to obtain the Hessian
$$\mathrm{Hess}(\mathcal{E}_{\mathrm{RC}})(\delta\phi_1, \delta\phi_2)
= 2\lla \delta_1(L(e^{f_{\phi}})), \delta\phi_2 \rra
= 2\lla (\delta_1 L)(e^{f_{\phi}})+L(\delta_1 e^{f_{\phi}}), \delta\phi_2 \rra,$$
where $\delta_1$ means the variation along $\delta\phi_1\in T_{\phi}\mathcal{M}(\omega)$.
Since $\omega_{\phi}=\omega+\deldel\phi$ is generalized K\"ahler Einstein, 
$Z:=\mathrm{grad}_{\phi}e^{f_{\phi}}$ is holomorphic.
By Lemma \ref{Holomorphicity}, 
$e^{f_{\phi}}$ is in the Kernel of $L$ for the K\"ahler metric $\omega_{\phi}$.
Note that $Z$ can be also written by $\mathrm{grad}_{\phi+t\delta\phi}(e^{f_{\phi}}+tZ(\delta\phi))$  for any small $t\in (-\e, \e)$,
since  it is easy to see that $$i_Z(\omega_{\phi}+t\deldel\delta\phi)=\sqrt{-1}\dbar(e^{f_{\phi}}+tZ(\delta\phi)).$$
Thus, by Lemma \ref{Holomorphicity} again, a perturbation $e^{f_{\phi}}+tZ(\delta\phi)$ is in the Kernel of $L$ for the K\"ahler metric $\omega_{\phi}+t\deldel \delta\phi$.
When we denote the operator $L$ for the K\"ahler metric $\omega_{\phi}+t\deldel \delta\phi$ by $L_t$,
we thus have $$L_t(e^{f_{\phi}}+tZ(\delta\phi))=0.$$
Taking derivative at $t=0$, we have
$$(\delta L)(e^{f_{\phi}})=-L(Z(\delta\phi))=-L \langle \partial\delta\phi, \partial(e^{f_{\phi}}) \rangle.$$
Therefore we obtain
\begin{eqnarray*}
(\delta L)(e^{f_{\phi}})+L(\delta e^{f_{\phi}})
&=& L\Bigl( -\langle \partial\delta\phi, \partial f_{\phi} \rangle e^{f_{\phi}} 
+\Bigl( -\Delta_{\phi}\delta\phi -\delta\phi + \frac{1}{V}\int_X \delta\phi e^{f_{\phi}}\omega_{\phi}^n \Bigr)e^{f_{\phi}}  \Bigr) \\
&=& L\overline{L}(\delta\phi).
\end{eqnarray*}
Similarly we have
$\delta(\overline{L}(e^{f_{\phi}}))=\overline{L}L(\delta\phi).$
This completes the proof.
\qed

\noindent{\bf Proof of Corollary \ref{Hessian2}}.
This is an immediate consequence of Theorem \ref{Hessian1} and Lemma \ref{L}.
\qed

\section{Matsushima's type decomposition theorem}
As an application of Corollary \ref{Hessian2}, we give another proof of Theorem \ref{MaLi} which was originally proved by Mabuchi \cite[Theorem 4.1]{Ma01}.
In Mabuchi's proof, it was essential to apply the strict periodicity \cite[Theorem F]{FM} for the real part of the extremal K\"ahler vector field $\mathrm{grad}_{\phi}\mathrm{pr}(S(\omega_{\phi})-\overline{S})$,
where we let
$$\mathrm{pr} : L^2(X, \omega_{\phi}) \to\Set{f\in C^{\infty}_{\mathbb{R}}(X) 
| \dbar \mathrm{grad}_{\phi}f=0 \quad\text{and}\quad \int_Xf\omega_{\phi}^n=0}$$
be the projection, and we let $S(\omega_{\phi})$ be the scalar curvature of $\omega_{\phi}$ and $\overline{S}$ its average.
The vector field $-\mathrm{grad}_{\phi}e^{f_{\phi}}$ in Theorem \ref{MaLi} is nothing but the extremal K\"ahler vector field.
Indeed, we have $1-e^{f_{\phi}}=\mathrm{pr}(1-e^{f_{\phi}})$ by definition of the generalized K\"ahler Einstein metric, 
and we have $\mathrm{pr}(1-e^{f_{\phi}})=\mathrm{pr}(S(\omega_{\phi})-\overline{S})$ by \cite[Theorem 2.1]{Ma01}.

We now prove Theorem \ref{MaLi} by only applying the commutativity of $L$ and $\overline{L}$.

\noindent{\bf Proof of Theorem \ref{MaLi}.}
Since operators $L$ and $\overline{L}$ are commutative by Corollary \ref{Hessian2},
we see that $\overline{L}\in\End(\Ker L)$.
Let $E_{\lambda}$ be the $\lambda$-eigenspace of $\overline{L}|_{\Ker L}$.
Note that $\lambda\geq 0$, since the operator $\overline{L}$ is non-negarive.
Let us take any $u\in E_{\lambda}$.
Since $E_{\lambda}\subset \Ker L$, we have $\mathrm{grad}_{\phi}u \in \mathfrak{h}(X).$
We also have
\begin{eqnarray*}
\lambda u &=& \overline{L}u \\
&=& (\overline{L} -L)u \\
&=& -\langle \partial u, \partial e^{f_{\phi}} \rangle + \langle \dbar u, \dbar e^{f_{\phi}} \rangle \\
&=& \{ e^{f_{\phi}}, u \},
\end{eqnarray*}
where $\{ \cdot, \cdot \}$ is the Poisson bracket defined by $\{ u, v \} = (\mathrm{grad}_{\phi}v)u-(\mathrm{grad}_{\phi}u)v$.
It is well-known that the map $u\mapsto -\mathrm{grad}_{\phi}u$ is a complex Lie algebra homomorphism 
from $(C^{\infty}_{\mathbb{C}}, \{ \cdot, \cdot \})$ to $(\Gamma(TX), [ \cdot,\cdot ])$, 
where $[\cdot, \cdot ]$ is the Lie bracket defined by $[Z,W]=ZW-WZ$.
Thus it follows that
$$\lambda\mathrm{grad}_{\phi}u=[-\mathrm{grad}_{\phi}e^{f_{\phi}}, \mathrm{grad}_{\phi}u].$$
Let $\mathfrak{h}_{\lambda}(X):=\mathrm{grad}_{\phi}(E_{\lambda})$.
Since by Lemma \ref{Holomorphicity}, every holomorphic vector field on $X$ can be written as $\mathrm{grad}_{\phi}u$ for a function $u\in\Ker L$ unique up to additive constant,
we then have a decomposition
$$\mathfrak{h}(X)=\sum_{\lambda\geq 0}\mathfrak{h}_{\lambda}(X).$$
By the above computation, we see that each $\mathfrak{h}_{\lambda}(X)$ is 
the $\lambda$-eigenspae of the action of $-\mathrm{ad}(\mathrm{grad}_{\phi}e^{f_{\phi}})$.

Since $E_0=\Ker L \cap \Ker\overline{L}$, if $u\in E_0$, then both the real part and the imaginary part of $u$ are in $E_0$.
It follows that $$E_0=\Set{u\in\sqrt{-1}C^{\infty}_{\mathbb{R}}(X) | \mathrm{grad}_{\phi}u \in \mathfrak{h}(X) }\otimes\mathbb{C}.$$
Thus $$\mathfrak{h}_0(X)=\Set{\mathrm{grad}_{\phi}u+\overline{\mathrm{grad}_{\phi}u} | u\in\sqrt{-1}C^{\infty}_{\mathbb{R}}(X) \text{ and } \mathrm{grad}_{\phi}u \in \mathfrak{h}(X)}\otimes\mathbb{C}.$$
By \cite[Lemma 2.3.8]{Fubook}, $\Set{\mathrm{grad}_{\phi}u+\overline{\mathrm{grad}_{\phi}u} | u\in\sqrt{-1}C^{\infty}_{\mathbb{R}}(X) \text{ and } \mathrm{grad}_{\phi}u \in \mathfrak{h}(X)}$ is equal to the space of Killing vector fields on $X$.
Since the Lie algebra of Killing vector fields on $X$ corresponds to the Isometry group of $X$ which is a compact group, we see that $\mathfrak{h}_0(X)$ is reductive.
This completes the proof.
\qed

\noindent{\bf Proof of Corollary \ref{Matsushima}.}
For any K\"ahler Einstein metric $\phi\in\mathcal{M}(\omega)$, the holomorphic vector field $\mathrm{grad}_{\phi}e^{f_{\phi}}$ vanishes.
By Theorem \ref{MaLi}, we thus have $\mathfrak{h}(X)=\mathfrak{h}_0(X)$.
\qed

\section{The inverse Monge-Amp\`ere flow and generalized K\"ahler Einstein metrics}\label{Ding flow}
Very recently, Collins-Hisamoto-Takahashi \cite{CHT} developed a geometric flow $\{\phi_t\}_{t\in[0,\infty)}\subset\mathcal{M}(\omega)$, 
called the {\it inverse Monge-Amp\`ere flow} (written as the MA$^{-1}$ flow for simplicity), defined by
$$\frac{d\phi_t}{dt}=1-e^{f_{\phi_t}}.$$ 
They proved the existence of a long time solution for any initial K\"ahler metric.
It is easy to see that self-similar solutions of the MA$^{-1}$ flow are generalized K\"ahler Einstein metrics.
Furthermore the Ricci Calabi functional $\mathcal{E}_{\mathrm{RC}}$ is monotonically decreasing along the MA$^{-1}$ flow.
Indeed, by Lemma \ref{first variation} and Lemma \ref{L} we have
$$\frac{d}{dt}\mathcal{E}_{\mathrm{RC}}(\phi_t)=-2\lla L(e^{f_{\phi_t}}), e^{f_{\phi_t}} \rra \leq 0,$$ along the flow $\phi_t$.
Therefore, Corollary \ref{Hessian2} suggests that for any Fano manifold admitting a generalized K\"ahler Einstein metric, the MA$^{-1}$ flow with any initial K\"ahler metric converges to a generalized K\"ahler Einstein metric in some sense.

\newpage

\bigskip

\address{
Mathematical Institute \\ 
Tohoku University \\
Sendai 980-8578 \\
Japan
}
{satoshi.nakamura.r8@dc.tohoku.ac.jp}
\end{document}